\documentclass[a4paper,11pt]{article}
\usepackage[utf8]{inputenc}
\usepackage[T1]{fontenc}

\usepackage{amsthm,amsmath}
\usepackage{tikz}
\usepackage{mathrsfs,amssymb,amsfonts} 
\usepackage{enumitem}
\usepackage{fullpage}
\usepackage{hyperref, enumerate}
\usepackage[babel]{microtype}
\usepackage[english]{babel}
\usepackage[capitalise]{cleveref}

\usepackage{thmtools}
\usepackage{mathtools}
\usepackage{amssymb}
\usepackage[nomath]{lmodern}
\usepackage{graphicx}
\usepackage{pgf,tikz,tkz-graph,subcaption}
\usetikzlibrary{arrows,shapes}
\usetikzlibrary{decorations.pathreplacing}
\usepackage{tkz-berge}
\usepackage{enumitem}
\usepackage[normalem]{ulem}
\usepackage{hyperref}
\hypersetup{colorlinks = true, linkcolor = blue, citecolor = blue, urlcolor = blue}

\newcommand*{\abs}[1]{\left\lvert #1\right\rvert}

\allowdisplaybreaks

\usepackage[margin=1in]{geometry}

\parskip 4pt

\newtheorem{defi}{Definition}

\newtheorem{thm}[defi]{Theorem}
\newtheorem{lem}[defi]{Lemma}

\newtheorem{claim}[defi]{Claim}
\newcommand*{\myproofname}{Proof}
\newenvironment{claimproof}[1][\myproofname]{\begin{proof}[#1]}{\end{proof}}

\title{Resolution of two conjectures by Erd\H{o}s and Hall concerning separable numbers }
\author{Stijn Cambie \thanks{Department of Computer Science, KU Leuven Campus Kulak-Kortrijk, 8500 Kortrijk, Belgium. Supported by a postdoctoral fellowship by the Research Foundation Flanders (FWO) with grant number 1225224N. Email: \protect\href{mailto:stijn.cambie@hotmail.com}{\protect\nolinkurl{stijn.cambie@hotmail.com}}}
\and Wouter van Doorn\thanks{\protect\href{wonterman1@hotmail.com}{\protect\nolinkurl{wonterman1@hotmail.com}}}}

\begin{document}
\parindent=0cm
\maketitle

\begin{abstract}
    Erd\H{o}s and Hall defined a pair $(m, n)$ of positive integers to be interlocking, if between any pair of consecutive divisors (both larger than $1$) of $n$ (resp. $m$) there is a divisor of $m$ (resp. $n$). 
    A positive integer is said to be separable if it belongs to an interlocking pair. We prove that the lower density of separable powers of two is positive, as well as the lower density of powers of two which are not separable. Finally, we prove that the number of interlocking pairs whose product is equal to the product of the first primes, is finite. We hereby resolve two conjectures by Erd\H{o}s and Hall.
\end{abstract}

\section{Introduction}\label{sec:intro}
Following the terminology of Erd\H{o}s and Hall~\cite{EH78}, we say that two positive integers $m$ and $n$ \emph{interlock} if between every two divisors of $n$ (both larger than $1$) there exists a divisor of $m$, and between every two divisors of $m$ (both larger than $1$) there exists a divisor of $n$. If this holds, then we call $(m, n)$ an interlocking pair. For example, $(63, 64)$ is an interlocking pair, as we can order their divisors as follows: $$1 = 1 < 2 < 3 < 4 < 7 < 8 < 9 < 16 < 21 < 32 < 63 < 64,$$
and note that the divisors of $63$ and $64$ alternate here.

A positive integer $n$ is defined to be \emph{separable}, if a positive integer $m$ exists such that $m$ and $n$ interlock. In particular, both $63$ and $64$ are separable, which incidentally contradicts the claim in \cite{EH78} that $2^k$ cannot be separable if $k+1 \ge 5$ is prime. The reason that $2^k$ can interlock with an integer $m$, even if $k+1$ is prime, is that it is possible for the number of divisors of $2^k$ and $m$ to differ by $1$, see \cref{lem:tau}.

In any case, denote with $A(x)$ the number of separable $n \le x$. In \cite{EH78}, Erd\H{o}s and Hall stated that they could not settle whether $A(x) = o(x)$ holds, and this still seems like the most important open problem regarding separable numbers. On the other hand, they did manage to prove the lower bound $A(x) > \frac{cx}{\log \log x}$, for some absolute constant $c > 0$ and all large enough $x$. 
As a positive integer seems more likely to be separable if its divisors are spread out, Erd\H{o}s and Hall then conjectured that $n = 2^k$ is separable for almost all $k$. They furthermore conjectured that $m$ and $n$ do not interlock if $mn$ is the product of the first $k$ primes and $k$ is large enough. In this paper we refute the first conjecture (see Theorem~\ref{thm:2^k_separable} and Theorem~\ref{thm:posdens_2^kseparable}), and confirm the second one (see Theorem~\ref{thm:onlyfewproducts}).
A computer verification of our results with Lean can be found at \cite{LeanFile}.

\section{Main results and proofs}\label{sec:results}
Let $\tau(n)$ be the number of divisors of $n$, and let $d_2(n)$ denote the smallest prime divisor of $n$. We then start off with a lemma on interlocking pairs that quickly follows from their definition.

\begin{lem}\label{lem:tau}
    Let $(m, n)$ be an interlocking pair with $d_2(n) < d_2(m)$. If $n < m$, then $\tau(m) = \tau(n)$, and if $n > m$, then $\tau(m) = \tau(n) - 1$.
\end{lem}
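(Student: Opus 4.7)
The plan is to enumerate the nontrivial divisors of both integers in one increasing list and argue that this list is forced to alternate strictly between divisors of $n$ and divisors of $m$, after which the conclusion follows by inspecting which integer supplies the largest element.

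First I would set $s=\tau(n)-1$ and $t=\tau(m)-1$, and write the divisors $>1$ of $n$ and $m$ as $p_1<p_2<\dots<p_s$ and $q_1<q_2<\dots<q_t$ respectively. Note $p_1=d_2(n)$ and $q_1=d_2(m)$, so the hypothesis $d_2(n)<d_2(m)$ becomes $p_1<q_1$; in particular the smallest element of the merged list is $p_1$.

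Next I would establish the strict alternation. On the one hand, the interlocking condition applied to $n$ forces a $q_j$ strictly between every pair $p_i,p_{i+1}$, so at least one $q$ lies between every two consecutive $p$'s. On the other hand, if two distinct $q_j,q_{j+1}$ both landed in the open interval $(p_i,p_{i+1})$, then no $p_k$ would lie between these two consecutive divisors of $m$, contradicting the other half of the interlocking condition. Hence exactly one $q_j$ sits between $p_i$ and $p_{i+1}$ for each $1\le i<s$, and symmetrically exactly one $p_i$ sits between $q_j$ and $q_{j+1}$ for each $1\le j<t$. Combined with $p_1<q_1$, this forces the merged list to read $p_1,q_1,p_2,q_2,p_3,q_3,\dots$ with strict alternation.

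Finally I would read off the conclusion from the largest element. If $n<m$, then $p_s<q_t$, so the merged list ends $\dots,p_s,q_t$; strict alternation then forces $t=s$, giving $\tau(m)=\tau(n)$. If $n>m$, then $p_s>q_t$, so the list ends $\dots,q_t,p_s$; strict alternation now forces $s=t+1$, giving $\tau(m)=\tau(n)-1$. The only nontrivial step is the alternation argument, and even that is immediate once one notes that the interlocking condition is symmetric and must be applied in both directions; the rest is bookkeeping at the two endpoints of the merged list.
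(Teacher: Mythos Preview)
Your proof is correct and follows essentially the same approach as the paper's: both arguments list the nontrivial divisors of $m$ and $n$, use the interlocking condition in both directions to force strict alternation beginning with the smallest divisor of $n$, and then read off $\tau(m)=\tau(n)$ or $\tau(m)=\tau(n)-1$ from whether $m$ or $n$ contributes the largest element. The paper simply writes down the resulting chain $d'_2<d_2<d'_3<d_3<\cdots$ without spelling out the ``at least one / at most one'' justification you give, so your version is a more explicit rendering of the same idea.
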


\begin{proof}
    Let the divisors of $m$ be $$1 = d_1 < d_2 < \cdots < d_{\tau(m)} = m,$$ and let the divisors of $n$ be $$1 = d'_1 < d'_2 < \cdots < d'_{\tau(n)} = n.$$
    If $n < m,$
    then $$d'_2<d_2<d'_3<d_3< \cdots <d'_{\tau(n)} < d_{\tau(m)},$$ implying $\tau(m)=\tau(n)$.
    And if $n > m,$
    then $$d'_2<d_2<d'_3<d_3< \cdots <d_{\tau(m)}<d'_{\tau(n)},$$ implying $\tau(m)=\tau(n)-1$.
\end{proof}

In \cite{EH78} it is conjectured that $2^k$ is separable for almost all $k$. However, as it turns out, the lower density of $k$ for which $2^k$ is not separable, is positive.

\begin{thm} \label{thm:2^k_separable}
    For all $k > 2$ with $k \equiv 1,2,9,10 \pmod{12}$, we have that $n = 2^k$ is not separable.
\end{thm}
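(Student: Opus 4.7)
The plan is to assume for contradiction that $(m, 2^k)$ is an interlocking pair for some $k > 2$ with $k \equiv 1, 2, 9, 10 \pmod{12}$, and to derive that $\tau(m) \bmod 12$ must lie in $\{0, 4, 6, 8\}$, contradicting $\tau(m) \in \{k, k+1\}$.

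First I would handle the preliminaries. Since $2^k$ has no divisor in $(2,3)$, if $2 \mid m$ then interlocking would force $3 \mid m$ as well, giving consecutive divisors $2$ and $3$ of $m$ with no divisor of $2^k$ between them; hence $m$ must be odd. Applying \cref{lem:tau} then gives $\tau(m) \in \{k, k+1\}$. Writing $1 = d_1 < d_2 < \cdots < d_\tau = m$ for the divisors of $m$, the interlocking forces $d_i \in (2^{i-1}, 2^i)$ for $2 \leq i \leq k$, so $d_2 = 3$, $d_3 \in \{5, 7\}$, and each dyadic interval $(2^{j-1}, 2^j)$ contains at most one divisor of $m$.

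The main work is to pin down the small-prime part of $m$ using this ``one divisor per dyadic interval'' principle. A short case check shows $v_3(m) \in \{1, 2\}$, $v_5(m) \leq 2$, $v_7(m) \leq 1$, and that the pair $(d_3, d_4)$ falls into exactly one of five scenarios: (A.1) $d_3 = 5$ with $v_5 = 1$; (A.2) $d_3 = 5$ with $v_5 = 2$; (B.1) $d_3 = 7,\ d_4 = 9$, forcing $v_3 = 2$ and $v_7 = 1$; (B.2) $d_3 = 7,\ d_4 = 11$, forcing $v_3 = v_7 = v_{11} = 1$; and (B.3) $d_3 = 7,\ d_4 = 13$, symmetric to B.2 with $v_{13}$ replacing $v_{11}$. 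In each scenario the exponents of every prime in $\{3, 5, 7, 11, 13\}$ dividing $m$ are pinned down, so writing $m = m_0 M$ with $m_0$ the product built from these primes and $\gcd(m_0, M) = 1$, one reads off $\tau(m_0) \in \{4, 6, 8\}$. Consequently $\tau(m) = \tau(m_0)\,\tau(M)$ is a multiple of $4$ or of $6$, which forces $\tau(m) \bmod 12 \in \{0, 4, 6, 8\}$.

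The conclusion is then immediate: for $k \equiv 1, 2, 9, 10 \pmod{12}$ the residues $\{k \bmod 12,\ (k+1) \bmod 12\}$ are contained in $\{1, 2, 3, 9, 10, 11\}$, which is disjoint from $\{0, 4, 6, 8\}$, contradicting $\tau(m) \in \{k, k+1\}$. I expect the main technical obstacle to be the systematic ruling-out of higher prime-power exponents and of extra small prime divisors of $m$ (why $v_3 \leq 2$, $v_5 \leq 2$, $v_7 \leq 1$, and why no unexpected prime $\leq 13$ sneaks into $m$ in each of the five cases); every such exclusion amounts to exhibiting two forced divisors inside a common dyadic interval $(2^{j-1}, 2^j)$, and cleanly packaging all of these collisions---rather than checking them one at a time for each $k$---is the delicate part.
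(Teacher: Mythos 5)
Your proof is correct in substance but takes a genuinely different route from the paper's. The paper first splits on the parity of $\tau(m)$: if $\tau(m)$ is odd, then $m$ is an odd square, so $15^2$ or $21^2$ divides $m$, giving two divisors in one dyadic interval; if $\tau(m)$ is even, it invokes \cref{lem:tau} to get $\tau(m)\in\{k,k+1\}$, uses $k\equiv 1,2,9,10\pmod{12}$ to conclude $\tau(m)\equiv 2,10\pmod{12}$, hence all but at most one exponent in $m$ is at least $4$, and then kills the four resulting divisibility patterns $3^4\cdot 5$, $3^4\cdot 7$, $3\cdot 5^4$, $3\cdot 7^4$ by exhibiting collisions. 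You instead use the congruence on $k$ only at the very end: from the one-divisor-per-dyadic-interval principle you pin down the $\{3,5,7,11,13\}$-part $m_0$ of $m$ (your five scenarios are indeed exhaustive, and the exponent determinations check out), conclude unconditionally that $4\mid\tau(m)$ or $6\mid\tau(m)$, and only then contradict $\tau(m)\in\{k,k+1\}$ with $k\equiv 1,2,9,10\pmod{12}$. Your route avoids the parity split and gives a cleaner structural statement about any $m$ interlocking with a large power of $2$ (note that your pattern B.2, $m_0=231$, is exactly the shape exploited in \cref{thm:posdens_2^kseparable}), while the paper's route is quicker to write because the congruence immediately forces fourth powers, leaving very few cases. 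One small repair in your case B.3: the exclusion of $v_{13}\ge 2$ is not literally symmetric to B.2, since $169$ and $91=7\cdot 13$ lie in different dyadic intervals; use instead the divisors $273=3\cdot 7\cdot 13$ and $507=3\cdot 13^2$, both in $(256,512)$ --- or simply observe that $v_{13}\in\{1,2\}$ gives $\tau(m_0)\in\{8,12\}$, still a multiple of $4$, so your final contradiction is unaffected.
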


\begin{proof}
    By contradiction; assume that $m$ and $n = 2^k$ interlock, for some $k \equiv 1,2,9,10 \pmod{12}$ with $k > 2$. Write $m=\prod_i p_i^{e_i}$ and recall that we have $\tau(m)=\prod_i (e_i+1).$ We will then deal with $\tau(m)$ odd and even separately.

    If $\tau(m)$ is odd, then $e_i$ must be even for all $i$, so that $m$ is a square. Since $3$ is the only integer between $2$ and $4$, and $5,7$ are the only odd integers between $4$ and $8$, we then see that either $15^2 \mid m$ or $21^2 \mid m$. However, $15^2$ has the two divisors $9$ and $15$ which are both between $8$ and $16$, while $21^2$ has the two divisors $49$ and $63$ which are both between $32$ and $64$; a contradiction in either case.

    If $\tau(m)$ is even, we apply~\cref{lem:tau}. Since $\tau(n) = k+1$ and $d_2(n) < d_2(m)$, we then obtain $\tau(m) \in\{k,k+1\}.$ And because $\tau(m)$ is assumed even and $k \equiv 1, 2, 9, 10 \pmod{12}$, we get $\tau(m) \equiv 2,10 \pmod{12}$. This implies that $\frac{\tau(m)}{2} \ge \frac{k}{2} > 1$ is not divisible by $2, 3$ or $4$, so that $e_i \ge 4$ for all but at most one $i$. We therefore deduce that $m$ is divisible by either $a_1 = 3^4 \cdot 5, a_2 = 3^4 \cdot 7, a_3 = 3\cdot 5^4$ or $a_4 = 3 \cdot 7^4$. Let us deal with them in order.

    The integer $a_1$ is divisible by $9$ and $15$, $a_2$ is divisible by $21$ and $27$, and $a_3$ is divisible by $75$ and $125$, none of which are allowed. Finally, if $a_4$ divides $m$, then $m$ needs a further odd divisor $d$ with $8 < d < 16$. If $d = 9$, then $m$ is divisible by $49$ and $63$. If $d \in \{11, 13\}$, then $m$ is divisible by $3d$ and $49$. And if $d = 15$, then $m$ is divisible by $5$ and $7$. Since every case leads to two divisors in between two consecutive powers of two, this finishes the proof.
\end{proof}

On the other hand, the lower density of $k$ such that $2^k$ is separable, is also positive. Before we formally state and prove this, let us first recall the following two results from~\cite[Prop.~6.8]{Dusart10} and~\cite{Dudek} on prime gaps. 

\begin{lem} \label{lem:bhp}
    For all $N \ge 2^{2^5}$, there exists a prime in the interval $\left(N, N\left(1 + \frac{1}{25 \log^2 N}\right)\right]$. Moreover, for all $N \ge 2^{2^{50}}$, there exists a prime in the interval $(N, N + 3N^{2/3}].$
\end{lem}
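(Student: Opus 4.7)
The statement is a pair of explicit prime-gap results quoted from the analytic number theory literature, so my plan is to derive both bounds from sharp forms of the prime counting function rather than via anything combinatorial. For the multiplicative bound, the natural approach is to invoke explicit inequalities for $\pi(x)$ of the shape $\pi(x) = \frac{x}{\log x} + \frac{x}{\log^2 x} + O\!\left(\frac{x}{\log^3 x}\right)$ with numerically explicit constants in the error term, which is the content of Dusart's work. Subtracting $\pi(N)$ from $\pi\!\left(N\bigl(1+\frac{1}{25\log^2 N}\bigr)\right)$ then yields an expression whose leading contribution is of order $\frac{N}{25 \log^3 N}$, which dominates the error term as soon as $N$ exceeds an explicit threshold. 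I would tune the threshold to $N \ge 2^{2^5}$ so that every constant gets absorbed cleanly, at which point the count of primes in the interval is strictly positive.

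For the additive bound, the plan is to use the Riemann--von Mangoldt explicit formula $\psi(x) = x - \sum_{\rho} \frac{x^\rho}{\rho} - \log(2\pi) - \frac12 \log(1-x^{-2})$ for Chebyshev's $\psi$-function, applied at both $x$ and $x+y$. Subtracting gives $\psi(x+y)-\psi(x) = y - \sum_\rho \frac{(x+y)^\rho - x^\rho}{\rho}$ up to a small explicit truncation error. I would then bound the zero sum by splitting at a height $T_0$: for zeros of height at most $T_0$ I would appeal to the numerical verification of the Riemann hypothesis (so $\mathrm{Re}(\rho)=1/2$ and the contribution is of order $x^{1/2}\log^2(x T_0)$), and for zeros above $T_0$ I would use a classical zero-free region together with a standard density-of-zeros estimate. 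Choosing $y = 3x^{2/3}$ and $x \ge 2^{2^{50}}$ should make the main term $y$ strictly exceed the bound on the zero sum, forcing $\psi(x+y)>\psi(x)$ and hence the existence of a prime in $(x,x+y]$.

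The main obstacle, in both parts, is the careful bookkeeping of absolute constants: the qualitative statements (that $\pi$ is almost linear on short multiplicative windows, and that $\psi$ grows almost linearly on short additive intervals) are standard, but extracting the specific thresholds $2^{2^5}$ and $2^{2^{50}}$ requires essentially best-known explicit input --- on the one side sharp bounds on $\pi(x)$, on the other the deepest available numerical verification of the Riemann hypothesis combined with the best classical zero-free region. This is precisely why both parts of the lemma are imported as black boxes from \cite{Dusart10} and \cite{Dudek}, and in my plan the \emph{proof} would really amount to deferring to their numerical tuning rather than reproving the underlying analytic machinery.
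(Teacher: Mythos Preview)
Your proposal is correct and matches the paper's treatment: the paper does not prove \cref{lem:bhp} at all but simply quotes it verbatim from \cite[Prop.~6.8]{Dusart10} and \cite{Dudek}, exactly as you concede in your final paragraph. Your sketch of how those papers obtain the bounds (explicit $\pi(x)$ estimates for the multiplicative gap; the explicit formula for $\psi$ with a split at height $T_0$ between the numerically verified RH range and the classical zero-free region for the additive gap) is an accurate high-level description of the cited arguments, but none of this appears in the paper itself.
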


Secondly, we will also need the following result from~\cite[Thm.~3]{Tenenbaum84}.

\begin{lem} \label{lem:tenenbaum}
    There exists an absolute constant $c$ such that for all $x, y, z$ with $2 \le y \le z \le x$ there are at most $\frac{cx \log y}{\log z}$ positive integers $n \le x$ which do not have a divisor in the interval $[y, z]$. 
\end{lem}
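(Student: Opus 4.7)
The key reduction is that any prime factor $p$ of $n$ with $y \le p \le z$ is itself a divisor of $n$ in $[y, z]$. Hence the set of $n \le x$ without a divisor in $[y, z]$ is contained in the set of $n \le x$ without a prime factor in $[y, z]$, and it suffices to bound the count of the latter.

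To bound that, each such $n$ factors uniquely as $n = ab$, where $a$ has all prime factors strictly below $y$ and $b$ has all prime factors strictly above $z$. Writing $\Phi(M, z) := \#\{b \le M : p \mid b \Rightarrow p > z\}$, this yields
\[
\#\{n \le x : n \text{ has no prime factor in } [y, z]\} \;=\; \sum_{\substack{a \le x \\ p \mid a \,\Rightarrow\, p < y}} \Phi(x/a,\, z).
\]
A standard upper-bound sieve (Brun's sieve / the fundamental lemma) gives $\Phi(M, z) \ll M/\log z$ uniformly in $M \ge 1$, $z \ge 2$, where the small-$M$ range $M < z^2$ is handled by observing that only $b = 1$ or $b$ a single prime in $(z, M]$ contributes and invoking Chebyshev's bound $\pi(M) \ll M/\log M$. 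Plugging the sieve estimate into the identity above and bounding the reciprocal sum by Mertens' third theorem,
\[
\sum_{\substack{a \\ p \mid a \,\Rightarrow\, p < y}} \frac{1}{a} \;=\; \prod_{p < y}\!\left(1 - \frac{1}{p}\right)^{-1} \;\ll\; \log y,
\]
gives the desired estimate $O(x \log y / \log z)$.

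The main obstacle I expect is establishing the sieve bound $\Phi(M, z) \ll M/\log z$ uniformly across the full parameter range; care is needed especially when $z$ is close to $x$ (so that the sieve's error term does not overwhelm the main term) and when $M$ is small relative to $z$ (where one must fall back on the prime-counting argument above). In the regime $y$ close to $z$, the claim is essentially trivial because $\log y / \log z \to 1$ and the elementary bound $\#\{n \le x : y \nmid n\} \le x$ already suffices. Once the uniform sieve estimate is in place, the rest of the proof is routine.
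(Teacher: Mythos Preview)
The paper does not prove this lemma at all; it simply quotes it as \cite[Thm.~3]{Tenenbaum84}. So there is no in-paper argument to compare against, and any self-contained proof you give is already ``different'' in that sense.

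Your reduction to integers with no \emph{prime} factor in $[y,z]$ is correct and is the natural route. There is, however, a genuine gap in the sieve step. The asserted uniform bound $\Phi(M,z)\ll M/\log z$ for all $M\ge 1$ is false: for $M<z$ one has $\Phi(M,z)=1$, which is not $O(M/\log z)$ when $M$ stays bounded and $z\to\infty$. Your proposed fallback (Chebyshev on $\pi(M)$) covers the range $z\le M<z^{2}$ but says nothing about $M<z$. If you repair the bound to the honest $\Phi(M,z)\ll 1+M/\log z$ and insert it into
\[
\sum_{\substack{a\le x\\ p\mid a\Rightarrow p<y}}\Phi(x/a,z),
\]
the ``$+1$'' produces an additional term $\Psi(x,y)=\#\{a\le x:\ a\text{ is }y\text{-smooth}\}$. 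You then need $\Psi(x,y)\ll x\log y/\log z$; since $z\le x$, this amounts to $\Psi(x,y)\ll x/u$ with $u=(\log x)/(\log y)$. This bound is true and in fact \emph{necessary} for your strategy (every $y$-smooth $n\le x$ has no prime factor in $[y,z]$, so your upper set already contains $\Psi(x,y)$ elements), but it is an independent, nontrivial input (e.g.\ via Rankin's method or a de Bruijn-type estimate) that your outline does not provide. Once that smooth-number estimate is added, the argument goes through.
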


Now, define $t := \max(5 + 3 \log c, 50)$ with $c$ as in~\cref{lem:tenenbaum}. Furthermore define $C := \log 2 \cdot 2^{t-2}$, and let $S$ be the set of positive integers $n$ with the property that if $1 = d_1 < \cdots < d_{\tau(n)} = n$ are the divisors of $n$, then $d_j \le e^{\max(C, d_{j-1})}$ for all $j$ with $2 \le j \le \tau(n)$. Finally, for a given integer $x$, let $S(x)$ be the set of integers $n \le x$ with $n \in S$. We then claim that $S$ contains more than half of all positive integers.

\begin{lem}\label{lem:rela_consec_divisors}
    For every $x \in \mathbb{N}$ we have $\abs{S(x)} > \frac{x}{2}$.
\end{lem}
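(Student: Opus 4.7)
The plan is to show $|S^c(x)| := x - |S(x)| < x/2$ by applying~\cref{lem:tenenbaum} to a carefully chosen doubly-exponential family of intervals. Unpacking the definition, $n \in S^c(x)$ means that $n$ has two consecutive divisors $d < d'$ with $d' > e^{\max(C,d)} \ge e^C$. Since $d' \mid n \le x$ this already forces $n > e^C$, so $S^c(x) = \varnothing$ whenever $x \le e^C$ and the claim is trivial there; I may therefore assume $x > e^C$.

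Set $a_k := C^{2^k}$ for $k \ge 0$, so that $a_0 = C$ and $a_{k+1} = a_k^2 \le e^{a_k}$ (the last inequality holds for any $a_k \ge 4$, which we have). Given $n \in S^c(x)$ with witness $(d, d')$, I claim that $n$ has no divisor in some interval $[a_{k+1}, e^{a_k}]$: take $k = 0$ if $d < C$, otherwise the unique $k$ with $d \in [a_k, a_{k+1})$; then $d' > e^{\max(C,d)} \ge e^{a_k}$ and $d < a_{k+1} \le e^{a_k}$, so the open interval $(d, d')$---which by hypothesis contains no divisor of $n$---covers $[a_{k+1}, e^{a_k}]$. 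Since $d' \le n \le x$, only $k$ with $e^{a_k} \le x$ are relevant. Applying~\cref{lem:tenenbaum} with $y = a_{k+1}$ and $z = e^{a_k}$ then yields
\[
\#\bigl\{n \le x : n \text{ has no divisor in } [a_{k+1}, e^{a_k}]\bigr\} \;\le\; \frac{cx \log a_{k+1}}{a_k} \;=\; \frac{2^{k+1}\, cx \log C}{C^{2^k}},
\]
and a union bound over $k$ gives $|S^c(x)| \le 2cx\log C \cdot \sum_{k \ge 0} 2^k/C^{2^k} \le 4cx\log C/C$, the series being dominated by its $k=0$ term $1/C$ thanks to the enormous size of $C$.

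To conclude, one checks the numerical inequality $8c\log C < C$; plugging in $C = 2^{t-2}\log 2$ and $t \ge \max(5+3\log c,\,50)$ verifies this with ample slack. The only real design choice is the doubly-exponential scale $a_k = C^{2^k}$, engineered so that the Tenenbaum savings factor $\log a_{k+1}/a_k$ decays super-geometrically in $k$ and the union bound stays negligible compared to $x$. I do not foresee any significant obstacle beyond identifying this scale and organising the union bound cleanly.
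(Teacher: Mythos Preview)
Your proof is correct and follows essentially the same strategy as the paper's: cover the relevant range with a doubly-exponential family of intervals indexed by $C^{2^k}$, apply \cref{lem:tenenbaum} to each, and sum via a union bound. The only cosmetic differences are that you argue directly on the complement $S^c(x)$ and use the intervals $[C^{2^{k+1}}, e^{C^{2^k}}]$ instead of the paper's $[\exp(C^{2^{i-1}}), \exp(C^{2^i})]$, which changes the arithmetic of the final estimate (your $8c\log C < C$ versus the paper's $C > 5c^2$) but not the substance.
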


\begin{proof}
     We may, by the definition of $S(x)$, without loss of generality assume $x \ge e^C$. So with $l$ defined as $\left \lfloor \log_2 \log_C \log x \right \rfloor$, we then get $l \ge 0$. We furthermore define $$y_i := \exp(C^{2^{i-1}}) \qquad \text{and} \qquad z_i := \exp(C^{2^{i}})$$ for all $i$ with $0 \le i \le l$. By \cref{lem:tenenbaum}, the number of positive integers $n \le x$ for which an $i$ exists such that $n$ does not have a divisor in the interval $[y_i, z_i]$, is at most $\frac{cx \log y_{0}}{\log z_{0}} + \cdots + \frac{cx \log y_{l}}{\log z_{l}}$. Applying $t = \max(5 + 3 \log c, 50)$, we will bound this sum by using the lower bound $$C = \log 2 \cdot 2^{t-2} \ge \max\left(8\log 2 \cdot 2^{3 \log c}, \log 2 \cdot 2^{48}\right) > \max\left(5c^2, 100\right)$$ as follows:
\begin{align*}
    \sum_{i=0}^l \frac{cx \log y_i}{\log z_i} &= \sum_{i=0}^{l} \frac{cx}{C^{2^{i-1}}} \\
    &< cx\left(\frac{1}{\sqrt{C}} + \frac{1}{C} + \sum_{i=1}^{\infty} \frac{1}{C^{2i}}\right) \\
    &= cx\left(\frac{1}{\sqrt{C}} + \frac{1}{C} + \frac{1}{C^2 - 1}\right) \\
    &< \frac{10cx}{9\sqrt{C}} \\
    &< \frac{x}{2}.
\end{align*}

    Here, the final two inequalities use $C > 100$ and $C > 5c^2$ respectively. In other words, the number of $n \le x$ such that $n$ has a divisor in the interval $[y_i, z_i]$ for every $i$, is more than $\frac{x}{2}$. 

    Now, if $n \in \mathbb{N}$ is such that $n$ indeed has a divisor in all intervals $[y_i, z_i]$, then we in particular get that for consecutive divisors $d_{j-1}, d_j$ of $n$, either $d_j \le z_{0} = e^{C}$, or there exists an $i \ge 0$ such that $$y_i \le d_{j-1} < d_j \le z_{i+1}.$$ In the second case we have
\begin{align*}
    \log \log d_j &\le \log \log z_{i+1} \\
    &= 2^{i+1} \log C \\
    &< C^{2^{i-1}} \\
    &= \log y_i \\
    &\le \log d_{j-1}.
\end{align*}

    Or, in other words, $d_j < e^{d_{j-1}}$. In either case we have $d_j \le e^{\max(C, d_{j-1})}$, so we conclude $n \in S(x).$
\end{proof}

    The above lemma will help us show that the lower density of $k$ such that $2^k$ is separable, is positive. 

\begin{thm}\label{thm:posdens_2^kseparable}
    If $k \in 2^t S = \{k : k = 2^t k' \text{ for some } k' \in S\}$, then $2^k$ is separable. In particular, the lower density of the set of positive integers $k$ for which $2^k$ is separable, is at least $\frac{1}{2^{t+1}}$.
\end{thm}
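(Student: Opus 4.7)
The plan is to show, for each $k = 2^t k'$ with $k' \in S$, that one can build an odd integer $m < 2^k$ which interlocks with $n = 2^k$. Since $d_2(m) > 2 = d_2(n)$ and we want $m < n$, \cref{lem:tau} forces $\tau(m) = k$, which is equivalent to $m$ having exactly one divisor in each open interval $(2^j, 2^{j+1})$ for $j = 1, \ldots, k-1$. Once such an $m$ is built, the interlocking property is automatic: consecutive divisors of $m$ lie in distinct octaves, so some power of $2$ lies strictly between them.

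I would construct $m$ as a product of two parts. The first is a Fermat-style block $P = p_0 p_1 \cdots p_{t-1}$ with each $p_i$ chosen very close to $2^{2^i}$, which is possible by the first part of \cref{lem:bhp} (since $t \ge 50$, all the relevant targets $2^{2^i}$ lie well above $2^{2^5}$). The subset products $\prod_{i \in I} p_i$ approximate $2^{\sum_{i \in I} 2^i}$, and as $I$ ranges over all subsets of $\{0, \ldots, t-1\}$ the sums $\sum_{i \in I} 2^i$ traverse $\{0, 1, \ldots, 2^t - 1\}$, so $P$ already has a divisor in each of the first $2^t$ octaves. The second part is built from further primes that replicate this cluster at the higher scales $2^{2^t}, 2^{2\cdot 2^t}, \ldots, 2^{(k'-1) 2^t}$, with exponents dictated by the prime factorisation of $k'$ so that the total number of divisors of $m$ is exactly $2^t k' = k$ and the divisors biject with the octaves $(2^j, 2^{j+1})$ for $j = 0, 1, \ldots, k-1$.

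The main obstacle is controlling the multiplicative approximation errors so that each divisor actually lands in its intended octave. A naive construction using a single prime $q$ close to $2^{2^t}$ raised to the power $k'-1$ would incur an error in $q^{k'-1}/2^{(k'-1)2^t}$ growing linearly in $k'$, potentially exceeding the octave width. The remedy is to introduce one fresh prime per divisor of $k'$, thereby capping all exponents and letting the errors telescope; this is precisely where the assumption $k' \in S$ is used, as the growth bound $e_j \le e^{\max(C, e_{j-1})}$ on consecutive divisors of $k'$ guarantees both that each required prime can be supplied by \cref{lem:bhp} within a sufficiently narrow window and that the total log-error stays below $\log 2$.

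The density statement is immediate from \cref{lem:rela_consec_divisors}:
\[
|\{k \le x : k \in 2^t S\}| = |S(\lfloor x/2^t \rfloor)| > \frac{x}{2^{t+1}},
\]
so the lower density of the set of $k$ for which $2^k$ is separable is at least $1/2^{t+1}$.
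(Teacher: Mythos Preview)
Your overall architecture is the same as the paper's, and the density consequence is handled correctly. But there is a concrete gap in the ``Fermat-style block'' $P = p_0 p_1 \cdots p_{t-1}$.

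First, the justification you give is simply false: the condition $t \ge 50$ has no bearing on whether \cref{lem:bhp} applies at a target $2^{2^i}$; that lemma requires $N \ge 2^{2^5}$, i.e.\ $i \ge 5$, regardless of $t$. For $i \le 4$ you are forced to use the genuine Fermat primes $3,5,17,257,65537$.

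Second, and more seriously, those five primes already exhaust the error budget. Their product is
\[
3\cdot 5\cdot 17\cdot 257\cdot 65537 \;=\; 2^{32}-1,
\]
so the subset product for $I=\{0,1,2,3,4\}$ sits at distance only $1$ below the right end $2^{32}$ of its target octave $(2^{31},2^{32})$. The next prime above $2^{32}$ is $2^{32}+15$, and
\[
(2^{32}-1)(2^{32}+15) \;=\; 2^{64}+14\cdot 2^{32}-15 \;>\; 2^{64},
\]
so already for $I=\{0,\ldots,5\}$ the product overshoots its octave $(2^{63},2^{64})$. Thus $P$ does \emph{not} place one divisor in each of the first $2^t$ octaves, and the construction collapses.

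The paper's proof follows exactly your plan but sidesteps this obstruction by replacing the block $3\cdot 5\cdot 17$ with $231 = 3\cdot 7\cdot 11$. The eight divisors $1,3,7,11,21,33,77,231$ also occupy the first eight octaves, but now every ratio $d_{c_0+1}/2^{c_0+1}$ is below $10/11$, so there is a genuine multiplicative slack of $11/10$ left over. The Fermat-type primes then enter only from $p_4=257$ onward, and the paper shows that $\prod_{i\ge 4}(p_i/n_i)^{e_i} < e^{1/192} < 11/10$, which fits. Your description of the higher-scale part (one fresh prime per prime factor of $k'$, with the membership $k'\in S$ bounding the exponents) is the right idea and matches the paper's Claims, but you should be aware that the errors do not ``telescope''; they accumulate, and the point of the $S$-hypothesis is precisely to bound each exponent $e_i$ against $n_i^{1/4}$ so that the second estimate in \cref{lem:bhp} keeps the running product under control.
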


\begin{proof}
    For an integer $k \in 2^t S$, let $\prod_{i = 1}^r (e_i+1)$ be the prime factorisation of $k$. That is, $e_i+1$ is prime for all $i$, $e_i \le e_{i+1}$ for all $i$, $e_i = 1$ for all $i \le t$, and $k = \prod_i (e_i+1)$. For all $i$ with $4 \le i \le r$, define $n_i$ to be $2^{(e_1+1) \cdots (e_{i-1} + 1)}$, and let $p_i$ be the smallest prime larger than $n_i$. Then we will prove that $m = 231 \prod_{i=4}^{r} p_i^{e_i}$ interlocks with $n = 2^k$.

    More precisely, let $1 = d_1 < \cdots < d_8 = 231$ be the divisors of $231 = 3 \cdot 7 \cdot 11$, let $2^d > 1$ be a divisor of $n$, and write $$d = c_0 + \sum_{i = 4}^r c_i(e_1+1) \cdots (e_{i-1} + 1)$$ with $0 \le c_0 \le 7$ and $0 \le c_i \le e_i$ for all $i \ge 4$.\footnote{It follows by induction that it is indeed possible to write $d$ in such a way.} With the divisor $$d' := d_{c_0 + 1} \prod_{i = 4}^r p_i^{c_i}$$ of $m$, we will then show $2^d < d' < 2^{d+1}$. Since $$\tau(m) = 8 \prod_{i=4}^r (e_i+1) = \prod_{i=1}^r (e_i+1) = k = \tau(n) - 1,$$ we see that such a $d'$ would be unique and the proof would be finished. 

    Because $2^{c_0} \le d_{c_0 + 1}$ for all $c_0$ with $0 \le c_0 \le 7$, and $n_i^{c_i} < p_i^{c_i}$ by definition of $p_i$, the inequality $2^d < d'$ quickly follows. As for the inequality $d' < 2^{d+1}$, we need some upper bounds.
		
\begin{claim}\label{claim:est_ej}
    For all $i$ with $t+1 \le i \le r$, we have $e_i \le n_i^{1/4}.$
\end{claim}
		
\begin{claimproof}
    Let $i$ be fixed and recall that $C = \log 2 \cdot 2^{t-2}$. We then first of all get the estimate $$e^C = 2^{2^{t-2}} = n_{t+1}^{1/4} \le n_i^{1/4}.$$ Secondly, with $D := (e_{t+1} + 1) \cdots (e_{i-1} + 1)$, we have $$e^D < 2^{2D} = \left(2^{8D}\right)^{1/4} < n_i^{1/4},$$ as $2^t > 8$. Now, either $$e_i \le D < e^D \le n_i^{1/4},$$ or $e_i > D$. In the first case we are done, while in the latter case we note that both $D$ and $e_i+1$ divide $2^{-t}k$. As $D$ is defined as the product of all prime divisors of $2^{-t}k$ which are smaller than $e_i + 1$, we then see that $D$ and $e_i+1$ must actually be consecutive divisors of $2^{-t}k$. And we then still get $$e_i+1 \le e^{\max(C, D)} \le n_i^{1/4},$$ since $2^{-t}k \in S$ by assumption.	
\end{claimproof}

    By combining~\cref{claim:est_ej} with~\cref{lem:bhp} we can show an upper bound on $p_i$.

\begin{claim}\label{claim:pi_lower_bound}
    For all $i$ with $4 \le i \le r$, we have $\left(\frac{p_i}{n_i}\right)^{e_i} < e^{1/4^{i}}.$
\end{claim}

\begin{claimproof}
    Recall that $e_i = 1$ for all $i \le t$. Now, with $p_4=257$ and $p_5=65537$ one can check that the lemma holds for $i \in \{4, 5\}$. For $i \ge 6$ we apply~\cref{lem:bhp}, from which we in particular get that there is a prime in the interval $\left(n_i, n_i\left(1 + \frac{1}{25 \log^2 n_i}\right)\right].$ Since $\log(n_i) = \frac{\log 2}{2} \cdot 2^{i}$ if $i \le t$, while $\frac{25 \log^2 2}{4} > 1$ and $1 + \frac{1}{4^i} < e^{1/4^{i}}$, the claim follows for $6 \le i \le t$. As for $t+1 \le i \le r$, we know by $t \ge 50$ that there is actually a prime in the interval $\left(n_i, n_i + 3n_i^{2/3}\right]$. This gives
\begin{align*}
    \left(\frac{p_i}{n_i} \right)^{e_i} &\le \left(1 + 3n_i^{-1/3}\right)^{e_i} \\
    &< e^{3e_i n_i^{-1/3}} \\
    &\le e^{3n_i^{-1/12}} \\
    &< e^{1/4^i}.
\end{align*}

    The second to last inequality follows from~\cref{claim:est_ej}. As for the final inequality, note that we have $$i+1 < \frac{1}{12} \cdot 2^{i-2}$$ by $i > t > 9.$ We therefore see $$3 \cdot 4^i < 4^{i+1} < 2^{2^{i-1}/12} \le n_i^{1/12},$$ showing our claim.
\end{claimproof}

    Now, one can check that $\frac{d_{c_0+1}}{2^{c_0+1}} < \frac{10}{11}$ holds for all $c_0 \le 7$. We moreover have the upper bound
\begin{align*}
    \frac{p_{4}^{c_4} \cdots p_{r}^{c_r}}{n_{4}^{c_4} \cdots n_{r}^{c_r}} &\le \prod_{i = 4}^{r} \left(\frac{p_i}{n_i}\right)^{e_i} \\
    &< \prod_{i = 4}^{\infty} e^{1/4^i} \\
    &= e^{1/192} \\
    &< \frac{11}{10}.
\end{align*}

    We therefore conclude $$\frac{d'}{2^{d+1}} = \frac{d_{c_0+1}p_{4}^{c_4} \cdots p_{r}^{c_r}}{2^{c_0+1} n_{4}^{c_4} \cdots n_{r}^{c_r}} < 1,$$ as desired.
\end{proof}

For our final result, define $P_k$ to be the product of the first $k$ primes. If $k > 1$ is odd, then the product $mn$ of an interlocking pair cannot be equal to $P_k$ by~\cref{lem:tau}, as $$\abs{\tau(m) - \tau(n)} \ge 2^{\frac{k-1}{2}}  > 1$$ in that case. However, even if $k$ is even, Erd\H{o}s and Hall conjectured that the product $mn$ of an interlocking pair cannot be equal to $P_k$, if $k$ is large enough. This turns out to be true.

\begin{thm}\label{thm:onlyfewproducts}
    If $(m,n)$ is an interlocking pair with $mn = P_k$, then $k\le 8.$
\end{thm}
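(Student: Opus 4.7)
The plan is to recast the problem as a partition of the first $k$ primes, force the placement of every small prime by interlocking on small divisors, and finally derive a clean contradiction at the prime $23$. First I set $M = \{p : p \mid m\}$ and $N = \{p : p \mid n\}$. Since $mn = P_k$ is squarefree, $M$ and $N$ partition the first $k$ primes and $\tau(m) = 2^{\abs{M}}$, $\tau(n) = 2^{\abs{N}}$. After relabeling so that $2 \in N$ (i.e.\ $d_2(n) < d_2(m)$), \cref{lem:tau} forces $\tau(m) \in \{\tau(n), \tau(n) - 1\}$. Two powers of two differ by $1$ only as $(1, 2)$, corresponding to the trivial pair with $k = 1$; otherwise $\abs{M} = \abs{N} = k/2$, so $k$ must be even and $n < m$. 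Moreover, since the second-largest divisor of $m$ is $m / d_2(m)$ and must lie below $n$, we obtain the key ratio bound $m/n < d_2(m)$.

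Next I would pin down every prime up to $19$ by a short interlocking argument. At each step, assuming the wrong placement exhibits either two consecutive divisors of $n$ (or of $m$) with no divisor of $m$ (resp.\ $n$) strictly between them, or an interval between consecutive divisors of one sequence that contains too many divisors of the other. Working through the primes in order yields $3 \in M$ (otherwise $2, 3$ are consecutive $n$-divisors); $5 \in N$ (otherwise $3$ and $5$ are two $m$-divisors before the next $n$-divisor, which would be $\ge 7$); $7 \in M$ (no integer in $(5, 7)$ can divide $m$); $11 \in M$ (otherwise $10, 11$ are consecutive $n$-divisors); $13 \in N$ (otherwise $11, 13$ both precede the next $n$-divisor, which would be $\ge 17$); $17 \in M$ (nothing can divide $m$ in $(13, 17)$); and $19 \in N$ (otherwise $17, 19, 21$ would be three $m$-divisors inside the gap $(13, 26)$ of $n$). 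At this point the merged list of small divisors begins $1, 2, 3, 5, 7, 10, 11, 13, 17, 19, 21, 26, 33, 38, \ldots$ and alternates correctly, which is consistent with the $k = 8$ interlocking pair $(3 \cdot 7 \cdot 11 \cdot 17,\ 2 \cdot 5 \cdot 13 \cdot 19)$.

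Finally, for $k \ge 10$ the prime $23$ must be placed. If $23 \in M$, then $21$ and $23$ become consecutive divisors of $m$ but no $n$-divisor separates them (the only integer in between, $22 = 2 \cdot 11$, fails since $11 \in M$). If $23 \in N$, then $23$ and $26$ become consecutive divisors of $n$, yet the next $m$-divisor after $21$ is $33$. Both placements violate interlocking, so $k \le 9$; combined with the parity observation in the paragraph just before the theorem (odd $k > 1$ is excluded by \cref{lem:tau} since then $\abs{\tau(m) - \tau(n)} \ge 2^{(k-1)/2} > 1$), this gives $k \le 8$. The main obstacle is the careful small-prime bookkeeping in the second paragraph, since each forcing step requires ruling out every alternative local configuration; but once $\{3, 7, 11, 17\} \subseteq M$ and $\{2, 5, 13, 19\} \subseteq N$ are established, the $23$-obstruction is essentially immediate.
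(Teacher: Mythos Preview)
Your proof is correct and follows essentially the same approach as the paper: force the primes $2,3,5,7,11,13,17,19$ onto fixed sides one by one via local interlocking constraints, then obtain a contradiction at $23$ (your case split on $23\in M$ versus $23\in N$ is precisely the paper's two-step ``force $23$ onto the even side, then note $23$ and $26$ admit no separator''). The only cosmetic differences are that your labels $M,N$ are swapped relative to the paper's $m,n$, and the ratio bound $m/n<d_2(m)$ you state is never actually used.
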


\begin{proof}
    Let $k$ be larger than $8$ and assume without loss of generality that $m$ is divisible by $2$. This implies $3 \mid n$ and $5 \mid m$, which (due to the divisor $10$ of $m$) in turn implies that $7$ and $11$ must divide $n$. Continuing this process gives $p \mid m$ for $p \in \{2, 5, 13, 19, 23\}$ and $q \mid n$ for $q \in \{3, 7, 11, 17\}$. However, we now reach a contradiction as $23$ and $26$ are both divisors of $m$, while $n$ cannot be a multiple of $24$ or $25$.
\end{proof}

As Erd\H{o}s and Hall note, for even $k\le 8,$ it is possible for an interlocking pair $(m,n)$ to have $mn = P_k$, by taking $m$ and $n$ to be the product of the first $\frac k2$ primes in $\{2,5,13,19\}$ and $\{3,7,11,17\}$ respectively.


\end{document}